\newtheorem{theorem}{Theorem}[section]
\newtheorem{proposition}{Proposition}[section]
\newtheorem{corollary}{Corollary}[section]
\newtheorem{definition}{Definition}[section]
\newtheorem{example}{Example}[section]
\newtheorem{remark}{Remark}[section]
\numberwithin{equation}{section}
\begin{document}
\title[Some fixed-circle theorems on metric spaces]{\textbf{Some
fixed-circle theorems on metric spaces}}
\author[N. YILMAZ \"{O}ZG\"{U}R ]{N\.{I}HAL YILMAZ \"{O}ZG\"{U}R$^{1}$}
\address{Bal\i kesir University\\
Department of Mathematics\\
10145 Bal\i kesir, TURKEY}
\email{nihal@balikesir.edu.tr}
\author[N. TA\c{S}]{N\.{I}HAL TA\c{S}$^{1}$}
\address{Bal\i kesir University\\
Department of Mathematics\\
10145 Bal\i kesir, TURKEY}
\email{nihaltas@balikesir.edu.tr}
\thanks{$^{1}$Bal\i kesir University, Department of Mathematics, 10145 Bal\i
kesir, TURKEY}
\keywords{Fixed circle, the existence theorem, the uniqueness theorem.}
\subjclass[2010]{47H10, 54H25, 55M20, 37E10.}

\begin{abstract}
The fixed-point theory and its applications to various areas of science are
well known. In this paper we present some existence and uniqueness theorems
for fixed circles of self-mappings on metric spaces with geometric
interpretation. We verify our results by illustrative examples.
\end{abstract}

\maketitle

\section{\textbf{Introduction}}

\label{intro} It has been extensively studied the existence of fixed points
of functions which satisfy certain conditions since the time of Stefan
Banach. At first we recall the Banach contraction principle as follows:

\begin{theorem}
\cite{Ciesielski-2007} \label{thm3} Let $(X,d)$ be a complete metric space
and a self-mapping $T:X\rightarrow X$ be a contraction, that is, there
exists some $h\in \lbrack 0,1)$ such that%
\begin{equation*}
d(Tx,Ty)\leq hd(x,y)\text{,}
\end{equation*}%
for any $x,y\in X$. Then there exists a unique fixed point $x_{0}\in X$ of $%
T $.
\end{theorem}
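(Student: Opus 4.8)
The plan is to use the method of successive approximations (Picard iteration), building a candidate fixed point as the limit of an orbit and then invoking completeness. First I would fix an arbitrary starting point $u_{0}\in X$ and define the iterative sequence by $u_{n+1}=Tu_{n}$. Applying the contraction hypothesis repeatedly yields $d(u_{n+1},u_{n})\leq h^{n}d(u_{1},u_{0})$, so consecutive terms shrink geometrically.

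Next I would show that $\{u_{n}\}$ is a Cauchy sequence. For $m>n$, the triangle inequality together with the previous estimate and the geometric series bound gives
\[
d(u_{m},u_{n})\leq \sum_{k=n}^{m-1}d(u_{k+1},u_{k})\leq \left(\sum_{k=n}^{m-1}h^{k}\right)d(u_{1},u_{0})\leq \frac{h^{n}}{1-h}\,d(u_{1},u_{0}).
\]
Since $h\in\lbrack 0,1)$, the right-hand side tends to $0$ as $n\rightarrow\infty$, so the sequence is Cauchy. By completeness of $(X,d)$ it converges to some limit $x_{0}\in X$. This Cauchy estimate is the main obstacle, and it is exactly where completeness earns its role in the argument.

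The remaining step is to verify that the limit is fixed and that it is the only fixed point. Because a contraction is Lipschitz continuous, $T$ commutes with limits, so passing to the limit in $u_{n+1}=Tu_{n}$ gives $Tx_{0}=x_{0}$. For uniqueness, I would suppose $x_{0}$ and $y_{0}$ are both fixed points; then $d(x_{0},y_{0})=d(Tx_{0},Ty_{0})\leq h\,d(x_{0},y_{0})$, and since $h<1$ this forces $d(x_{0},y_{0})=0$, whence $x_{0}=y_{0}$. Everything after the Cauchy estimate is routine.
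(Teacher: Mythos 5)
Your proof is correct and complete: the Picard iteration, the geometric-series Cauchy estimate, the use of completeness to extract the limit, the continuity of $T$ to pass to the limit, and the standard uniqueness argument are all in order. Note that the paper itself offers no proof of this statement --- it is the classical Banach contraction principle, quoted from the literature as background --- so there is nothing to compare against; your argument is the standard successive-approximations proof one would find in any textbook treatment.
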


Since then many authors have been studied new contractive conditions for
fixed-point theorems. For example, Caristi gave the following fixed-point
theorem.

\begin{theorem}
\cite{Caristi-1976} \label{thm10} Let $(X,d)$ be a complete metric space and
$T:X\rightarrow X$. If there exists a lower semicontinuous function $\varphi
$ mapping $X$ into the nonnegative real numbers%
\begin{equation}
d(x,Tx)\leq \varphi (x)-\varphi (Tx)\text{,}  \label{caristi_eqn}
\end{equation}%
$x\in X$ then $T$ has a fixed point.
\end{theorem}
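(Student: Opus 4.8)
The plan is to reformulate the Caristi inequality as a partial order on $X$ and then locate a fixed point as a $\preceq$-minimal element of that order. Concretely, I would define a relation $\preceq$ on $X$ by declaring $y\preceq x$ whenever $d(x,y)\leq \varphi(x)-\varphi(y)$. The Caristi condition \eqref{caristi_eqn} then reads exactly as $Tx\preceq x$ for every $x\in X$, so if I can produce a $\preceq$-minimal point $x^{\ast}$, it will follow that $Tx^{\ast}\preceq x^{\ast}$ forces $Tx^{\ast}=x^{\ast}$, yielding the desired fixed point.

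First I would check that $\preceq$ is genuinely a partial order. Reflexivity is immediate from $d(x,x)=0$. For antisymmetry, if $y\preceq x$ and $x\preceq y$, then adding the two defining inequalities gives $2d(x,y)\leq 0$, hence $x=y$. Transitivity follows by adding the inequalities for $z\preceq y$ and $y\preceq x$ and invoking the triangle inequality. I would also record the monotonicity observation that $y\preceq x$ implies $\varphi(y)\leq \varphi(x)$, since $\varphi$ is nonnegative-valued and $d\geq 0$; this lets me control the order through the scalar quantity $\varphi$.

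The substantive step, and the one I expect to be the main obstacle, is showing that every chain $C\subseteq X$ admits a lower bound, so that Zorn's lemma (in its dual form) yields a minimal element. Here I would set $\alpha=\inf_{x\in C}\varphi(x)\geq 0$, choose a $\preceq$-decreasing sequence $x_{n}\in C$ with $\varphi(x_{n})\downarrow\alpha$, and use the defining inequality $d(x_{n},x_{m})\leq \varphi(x_{n})-\varphi(x_{m})$ for $m\geq n$ to see that $(x_{n})$ is Cauchy; completeness then provides a limit $x^{\ast}$. The delicate part is verifying that $x^{\ast}$ is a lower bound for the whole chain: for an arbitrary $x\in C$ one compares $x$ with the tail of $(x_{n})$ and passes to the limit in $d(x,x_{n})\leq \varphi(x)-\varphi(x_{n})$, at which point the lower semicontinuity of $\varphi$ is exactly what is needed to replace $\lim\varphi(x_{n})=\alpha$ by $\varphi(x^{\ast})$ via $\varphi(x^{\ast})\leq\alpha$ and conclude $x^{\ast}\preceq x$.

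With chains bounded below, Zorn's lemma supplies a minimal element $x^{\ast}$, and the reduction in the first paragraph finishes the argument. I would remark that the appeal to Zorn can be replaced by a direct, choice-economical construction: iterating the passage to near-infimal elements of the order sections $\{y:y\preceq x_{n}\}$ produces a Cauchy sequence whose limit is a minimal point. Either route isolates lower semicontinuity and completeness as the two hypotheses doing the real work, with no contraction-type estimate on $T$ required at all.
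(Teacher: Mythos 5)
The paper does not prove this statement at all: Theorem \ref{thm10} is Caristi's classical fixed-point theorem, quoted with a citation to \cite{Caristi-1976} and used only as motivation for the condition $(C1)$, so there is no in-paper argument to compare yours against. On its own merits, your proposal is correct: it is the standard Br\o ndsted-order proof, and all the key mechanisms are in place --- the order axioms, the monotonicity of $\varphi$ along $\preceq$, the Cauchy estimate $d(x_{n},x_{m})\leq \varphi(x_{n})-\varphi(x_{m})$, the use of lower semicontinuity in the correct direction ($\varphi(x^{\ast})\leq \liminf \varphi(x_{n})=\alpha$) to certify $x^{\ast}$ as a lower bound, and the reduction of the fixed-point claim to $\preceq$-minimality via $Tx\preceq x$. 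Two small points deserve to be made explicit in a written-up version: first, that a sequence in the chain with $\varphi(x_{n})\downarrow\alpha$ is automatically $\preceq$-decreasing, because on a chain $y\preceq x$ with $y\neq x$ forces $\varphi(y)<\varphi(x)$, so comparability plus the $\varphi$-values determines the order; second, in verifying that $x^{\ast}$ is a lower bound you tacitly assume that the tail of $(x_{n})$ lies below the arbitrary $x\in C$, which is true when $\varphi(x)>\alpha$ but requires the separate (trivial) observation that if $\varphi(x)=\alpha$ then $x_{n}\to x$ and hence $x^{\ast}=x$. With those remarks added, the argument is complete, and your closing observation is accurate: completeness and lower semicontinuity carry the whole proof, with no contractivity assumption on $T$.
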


In \cite{Rhoades}, Rhoades defined the following condition (which is called
Rhoades' condition):%
\begin{equation*}
d(Tx,Ty)<\max \left\{ d(x,y),d(x,Tx),d(y,Ty),d(x,Ty),d(y,Tx)\right\} \text{,}
\end{equation*}%
for all $x,y\in X$ with $x\neq y$.

In some special metric spaces, mappings with fixed points have been used in
neural networks as activation functions. For example, M\"{o}bius
transformations have been used for this purpose. It is known that a M\"{o}%
bius transformation is a rational function of the form%
\begin{equation}
T(z)=\frac{az+b}{cz+d},  \label{Mobius}
\end{equation}%
where $a$, $b$, $c$, $d$ are complex numbers satisfying $ad-bc\neq 0$. A M%
\"{o}bius transformation has at most two fixed points (see \cite%
{Jones-Singerman} for more details about M\"{o}bius transformations). In
\cite{Mandic-2000}, Mandic identified the activation function of a neuron
and a single-pole all-pass digital filter section as M\"{o}bius
transformations. He observed that the fixed points of a neural network were
determined by the fixed points of the employed activation function. So the
existence of the fixed points of an activation function were guaranteed by
the underlying M\"{o}bius transformation (one or two fixed points).

On the other hand, there are some examples of functions which fix a circle.
For example, let $%
\mathbb{C}
$ be the metric space with the usual metric%
\begin{equation*}
d(z,w)=\left\vert z-w\right\vert \text{,}
\end{equation*}%
for all $z,w\in
\mathbb{C}
$. Let the mapping $T$ be defined as%
\begin{equation*}
Tz=\dfrac{1}{\overline{z}}\text{,}
\end{equation*}%
for all $z\in
\mathbb{C}
\setminus \left\{ 0\right\} $. The mapping $T$ fixes the unit circle $%
C_{0,1} $. In \cite{Ozdemir-2011}, \"{O}zdemir, \.{I}skender and \"{O}zg\"{u}%
r used new types of activation functions which fix a circle for a complex
valued neural network (CVNN). The usage of these types activation functions
lead us to guarantee the existence of fixed points of the complex valued
Hopfield neural network (CVHNN).

Therefore it is important the notions of \textquotedblleft fixed
circle\textquotedblright\ and \textquotedblleft mappings with a fixed
circle\textquotedblright . It will be an interesting problem to study some
fixed-circle theorems on general spaces (metric spaces or normed spaces).

Motivated by the above studies, our aim in this paper is to examine some
fixed-circle theorems for self-mappings on metric spaces. Also we determine
the uniqueness conditions of these theorems. In Section \ref{sec:1} we
introduce the notion of a fixed circle and prove three theorems for the
existence of fixed circles of self-mappings on metric spaces. Also we give
some necessary examples for obtained fixed-circle theorems. In Section \ref%
{sec:2} we present some self-mappings which have at least two fixed circles.
Hence we give three uniqueness theorems for the fixed-circle theorems
obtained in Section \ref{sec:1}.

\section{\textbf{Existence of the self-mappings with fixed circles}}

\label{sec:1} In this section we give fixed-circle theorems under some
conditions on metric spaces and obtain some examples of mappings which have
or not fixed circles. At first we give the following definition.

\begin{definition}
\label{def1} Let $(X,d)$ be a metric space and $C_{x_{0},r}=\{x\in
X:d(x_{0},x)=r\}$ be a circle. For a self-mapping $T:X\rightarrow X$, if $%
Tx=x$ for every $x\in C_{x_{0},r}$ then we call the circle $C_{x_{0},r}$ as
the fixed circle of $T$.
\end{definition}

Now we give the following existence theorem for a fixed circle using the
inequality (\ref{caristi_eqn}).

\begin{theorem}
\label{thm1} Let $(X,d)$ be a metric space and $C_{x_{0},r}$ be any circle
on $X$. Let us define the mapping
\begin{equation}
\varphi :X\rightarrow \lbrack 0,\infty ),\varphi (x)=d(x,x_{0}),
\label{phi function}
\end{equation}%
for all $x\in X$. If there exists a self-mapping $T:X\rightarrow X$
satisfying

$(C1)$ $d(x,Tx)\leq \varphi (x)-\varphi (Tx)$\newline
and

$(C2)$ $d(Tx,x_{0})\geq r$,\newline
for each $x\in C_{x_{0},r}$, then the circle $C_{x_{0},r}$ is a fixed circle
of $T$.
\end{theorem}

\begin{proof}
Let us consider the mapping $\varphi $ defined in (\ref{phi function}). Let $%
x\in C_{x_{0},r}$ be any arbitrary point. We show that $Tx=x$ whenever $x\in
C_{x_{0},r}$. Using the condition $(C1)$ we obtain%
\begin{eqnarray}
d(x,Tx) &\leq &\varphi (x)-\varphi (Tx)=d(x,x_{0})-d(Tx,x_{0})  \label{eqn1}
\\
&=&r-d(Tx,x_{0})\text{.}  \notag
\end{eqnarray}%
Because of the condition $(C2)$, the point $Tx$ should be lies on or
exterior of the circle $C_{x_{0},r}$. Then we have two cases. If $%
d(Tx,x_{0})>r$ then using $($\ref{eqn1}$)$ we have a contradiction.
Therefore it should be $d(Tx,x_{0})=r$. In this case, using $($\ref{eqn1}$)$
we get%
\begin{equation*}
d(x,Tx)\leq r-d(Tx,x_{0})=r-r=0
\end{equation*}%
and so $Tx=x$.

Hence we obtain $Tx=x$ for all $x\in C_{x_{0},r}$. Consequently, the
self-mapping $T$ fixes the circle $C_{x_{0},r}$.
\end{proof}

\begin{remark}
\label{rem3} $1)$ We note that Theorem \ref{thm10} guarantees the existence
of a fixed point while Theorem \ref{thm1} guarantees the existence of a
fixed circle. In the cases where the circle $C_{x_{0},r}$ has only one
element $($see Example \ref{exm13} for an example$)$ Theorem \ref{thm1} is a
special case of Theorem \ref{thm10}.

$2)$ Notice that the condition $(C1)$ guarantees that $Tx$ is not in the
exterior of the circle $C_{x_{0},r}$ for each $x\in C_{x_{0},r}$. Similarly
the condition $(C2)$ guarantees that $Tx$ is not in the interior of the
circle $C_{x_{0},r}$ for each $x\in C_{x_{0},r}$. Consequently, $Tx\in
C_{x_{0},r}$ for each $x\in C_{x_{0},r}$ and so we have $T(C_{x_{0},r})%
\subset C_{x_{0},r}$ $($see Figure \ref{fig:1} for the geometric
interpretation of the conditions $(C1)$ and $(C2))$.

\begin{figure}[t]
\centering
\begin{subfigure}{.5\textwidth}
  \centering
  \includegraphics[width=.4\linewidth]{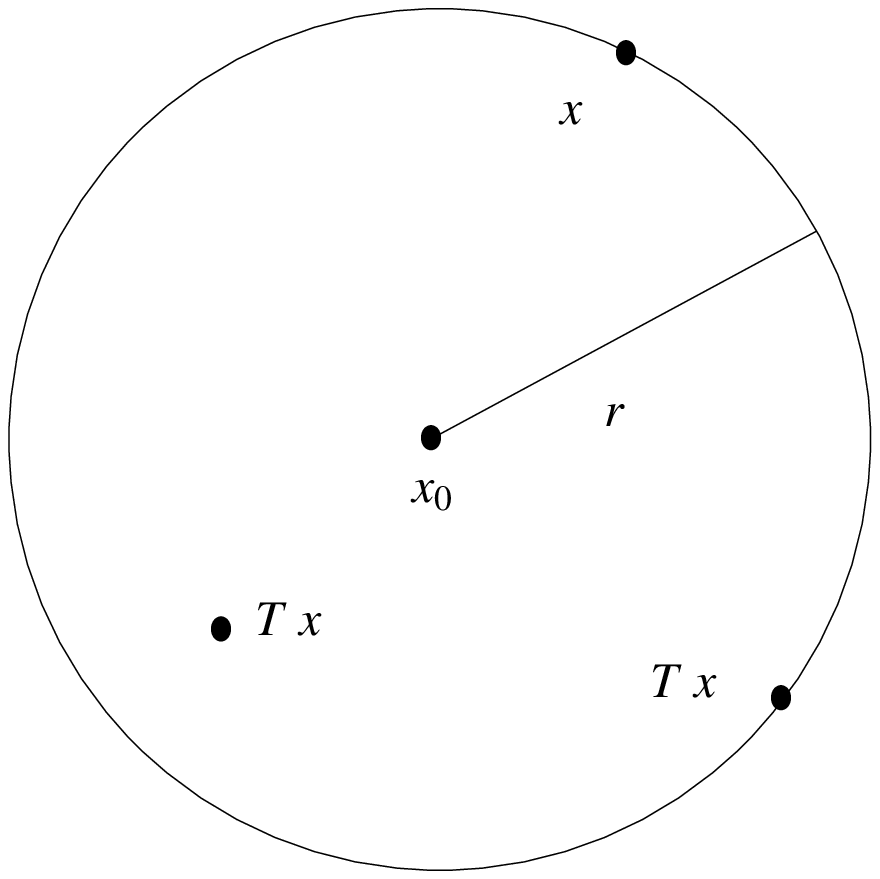}
  \caption{\Small The condition $(C1)$.}
  \label{fig:1A}
\end{subfigure}
\begin{subfigure}{.5\textwidth}
  \centering
  \includegraphics[width=.4\linewidth]{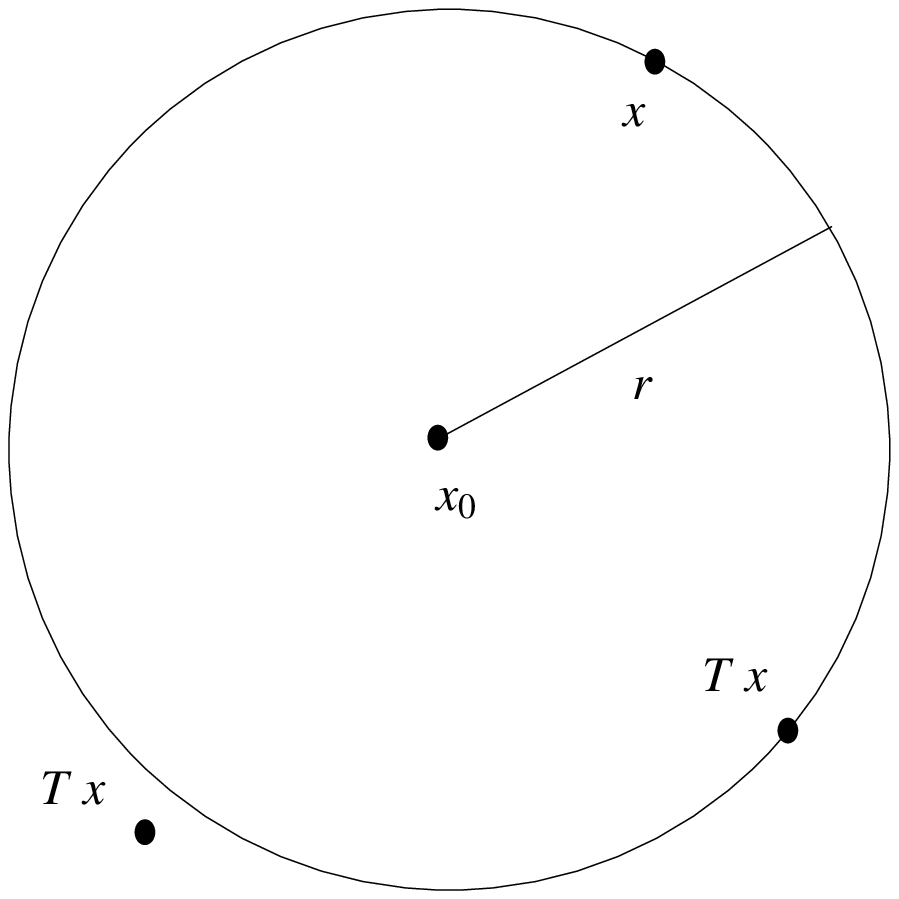}
  \caption{\Small  The condition $(C2)$.}
  \label{fig:1B}
\end{subfigure}
\begin{subfigure}{.5\textwidth}
  \centering
  \includegraphics[width=.4\linewidth]{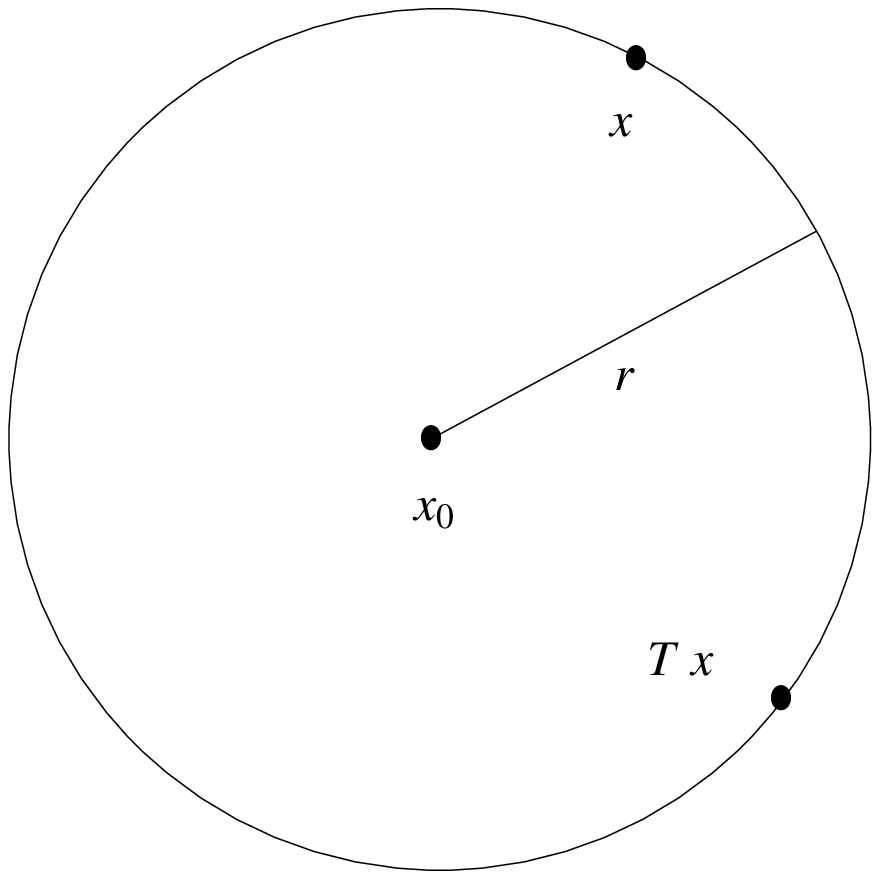}
  \caption{\Small  The condition $(C1) \cap (C2)$.}
  \label{fig:1B}
\end{subfigure}
\caption{\Small The geometric interpretation of the conditions $(C1)$ and $%
(C2) $.}
\label{fig:1}
\end{figure}
\end{remark}

Now we give a fixed-circle example.

\begin{example}
\label{exm1} Let $(X,d)$ be a metric space and $\alpha $ be a constant such
that%
\begin{equation*}
d(\alpha ,x_{0})>r\text{.}
\end{equation*}%
Let us consider a circle $C_{x_{0},r}$ and define the self-mapping $%
T:X\rightarrow X$ as%
\begin{equation*}
Tx=\left\{
\begin{array}{ccc}
x & \text{;} & x\in C_{x_{0},r} \\
\alpha & \text{;} & \text{otherwise}%
\end{array}%
\right. \text{,}
\end{equation*}%
for all $x\in X$. Then it can be easily seen that the conditions $(C1)$ and $%
(C2)$ are satisfied. Clearly $C_{x_{0},r}$ is a fixed circle of $T$.
\end{example}

Now, in the following examples, we give some examples of self-mappings which
satisfy the condition $(C1)$ and do not satisfy the condition $(C2)$.

\begin{example}
\label{exm2} Let $(X,d)$ be any metric space, $C_{x_{0},r}$ be any circle on
$X$ and the self-mapping $T:X\rightarrow X$ be defined as%
\begin{equation*}
Tx=x_{0}\text{,}
\end{equation*}%
for all $x\in X$. Then the self-mapping $T$ satisfies the condition $(C1)$
but does not satisfy the condition $(C2)$. Clearly $T$ does not fix the
circle $C_{x_{0},r}$.
\end{example}

\begin{example}
\label{exm3} Let $(%
\mathbb{R}
,d)$ be the usual metric space. Let us consider the circle $C_{1,2}$ and
define the self-mapping $T:%
\mathbb{R}
\rightarrow
\mathbb{R}
$ as%
\begin{equation*}
Tx=\left\{
\begin{array}{ccc}
1 & \text{;} & x\in C_{1,2} \\
2 & \text{;} & \text{otherwise}%
\end{array}%
\right. \text{,}
\end{equation*}%
for all $x\in
\mathbb{R}
$. Then the self-mapping $T$ satisfies the condition $(C1)$ but does not
satisfy the condition $(C2)$. Clearly $T$ does not fix the circle $C_{1,2}$ $%
($or any circle$)$.
\end{example}

In the following examples, we give some examples of self-mappings which
satisfy the condition $(C2)$ and do not satisfy the condition $(C1)$.

\begin{example}
\label{exm4} Let $(X,d)$ be any metric space and $C_{x_{0},r}$ be any circle
on $X$. Let $\alpha $ be chosen such that $d(\alpha ,x_{0})=\rho >r$ and
consider the self-mapping $T:X\rightarrow X$ defined by%
\begin{equation*}
Tx=\alpha \text{,}
\end{equation*}%
for all $x\in X$. Then the self-mapping $T$ satisfies the condition $(C2)$
but does not satisfy the condition $(C1)$. Clearly $T$ does not fix the
circle $C_{x_{0},r}$.
\end{example}

\begin{example}
\label{exm5} Let $(%
\mathbb{C}
,d)$ be the usual complex metric space and $C_{0,1}$ be the unit circle on $%
\mathbb{C}
$. Let us consider the self-mapping $T:%
\mathbb{C}
\rightarrow
\mathbb{C}
$ defined by%
\begin{equation*}
Tz=\left\{
\begin{array}{ccc}
\dfrac{1}{z} & ; & z\neq 0 \\
0 & ; & z=0%
\end{array}%
\right. \text{,}
\end{equation*}%
for all $z\in
\mathbb{C}
$. Then the self-mapping $T$ satisfies the condition $(C2)$ but does not
satisfy the condition $(C1)$. Clearly $T$ does not fix the circle $C_{0,1}$ $%
($or any circle$)$. Notice that $T$ fixes only the points $-1$ and $1$ on
the unit circle.
\end{example}

Now we give another existence theorem for fixed circles.

\begin{theorem}
\label{thm5} Let $(X,d)$ be a metric space and $C_{x_{0},r}$ be any circle
on $X$. Let the mapping $\varphi $ be defined as $($\ref{phi function}$)$.
If there exists a self-mapping $T:X\rightarrow X$ satisfying

$(C1)^{\ast }$ $d(x,Tx)\leq \varphi (x)+\varphi (Tx)-2r$\newline
and

$(C2)^{\ast }$ $d(Tx,x_{0})\leq r$,\newline
for each $x\in C_{x_{0},r}$, then $C_{x_{0},r}$ is a fixed circle of $T$.
\end{theorem}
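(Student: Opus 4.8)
The plan is to mirror the structure of the proof of Theorem~\ref{thm1}, since the two theorems are almost perfect duals of one another: there the point $Tx$ was forced to lie on or outside the circle and was pushed inward by $(C1)$, whereas here the condition $(C2)^{\ast}$ confines $Tx$ to the closed disk $d(Tx,x_{0})\leq r$ and $(C1)^{\ast}$ will force it back out onto the boundary. So I would fix an arbitrary point $x\in C_{x_{0},r}$ and aim to show $Tx=x$; since $x\in C_{x_{0},r}$ we have $\varphi(x)=d(x,x_{0})=r$ from the outset.

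First I would substitute $\varphi(x)=r$ and $\varphi(Tx)=d(Tx,x_{0})$ into $(C1)^{\ast}$ to rewrite the hypothesis in the cleanest form. This gives
\begin{equation*}
d(x,Tx)\leq \varphi(x)+\varphi(Tx)-2r = r+d(Tx,x_{0})-2r = d(Tx,x_{0})-r.
\end{equation*}
This is the key computation, and it is the exact analogue of equation~(\ref{eqn1}) in the previous proof, with the sign of the $d(Tx,x_{0})-r$ term reversed.

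Next I would invoke $(C2)^{\ast}$, namely $d(Tx,x_{0})\leq r$, so that the right-hand side $d(Tx,x_{0})-r$ is at most $0$. Combined with the nonnegativity of the metric on the left, this squeezes $d(x,Tx)$ between $0$ and a nonpositive quantity, forcing $d(x,Tx)=0$ and hence $Tx=x$. I expect the only point requiring a word of care is making the dichotomy explicit in the same style as Theorem~\ref{thm1}: if $d(Tx,x_{0})<r$ strictly, then the displayed inequality would give $d(x,Tx)<0$, a contradiction, so in fact $d(Tx,x_{0})=r$ must hold, and then $d(x,Tx)\leq r-r=0$. Either way $Tx=x$.

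Since $x\in C_{x_{0},r}$ was arbitrary, I would conclude that $Tx=x$ for every point of the circle, so $C_{x_{0},r}$ is a fixed circle of $T$, as required. There is no genuine obstacle here; the whole argument is a sign-flipped reflection of the proof of Theorem~\ref{thm1}, and the ``hard part,'' if any, is merely the bookkeeping of substituting $\varphi(x)=r$ correctly and reading off that the two geometric conditions pin $Tx$ to the circle from opposite sides.
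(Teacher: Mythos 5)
Your proposal is correct and follows essentially the same route as the paper's own proof: both derive $d(x,Tx)\leq d(Tx,x_{0})-r$ from $(C1)^{\ast}$ and then use $(C2)^{\ast}$ to force $d(Tx,x_{0})=r$ and hence $d(x,Tx)\leq 0$, so $Tx=x$. The case split you mention (ruling out $d(Tx,x_{0})<r$ by contradiction) is exactly how the paper phrases it as well.
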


\begin{proof}
We consider the mapping $\varphi $ defined in $($\ref{phi function}$)$. Let $%
x\in C_{x_{0},r}$ be any arbitrary point. Using the condition $(C1)^{\ast }$
we obtain%
\begin{eqnarray}
d(x,Tx) &\leq &\varphi (x)+\varphi (Tx)-2r=d(x,x_{0})+d(Tx,x_{0})-2r
\label{eqn2} \\
&=&d(Tx,x_{0})-r\text{.}  \notag
\end{eqnarray}%
Because of the condition $(C2)^{\ast }$, the point $Tx$ should be lies on or
interior of the circle $C_{x_{0},r}$. Then we have two cases. If $%
d(Tx,x_{0})<r$ then using $($\ref{eqn2}$)$ we have a contradiction.
Therefore it should be $d(Tx,x_{0})=r$. If $d(Tx,x_{0})=r$ then using $($\ref%
{eqn2}$)$ we get%
\begin{equation*}
d(x,Tx)\leq d(Tx,x_{0})-r=r-r=0
\end{equation*}%
and so we find $Tx=x$.

Consequently, $C_{x_{0},r}$ is a fixed circle of $T$.
\end{proof}

\begin{remark}
\label{rem4} Notice that the condition $(C1)^{\ast }$ guarantees that $Tx$
is not in the interior of the circle $C_{x_{0},r}$ for each $x\in
C_{x_{0},r} $. Similarly the condition $(C2)^{\ast }$ guarantees that $Tx$
is not in the exterior of the circle $C_{x_{0},r}$ for each $x\in
C_{x_{0},r} $. Consequently, $Tx\in C_{x_{0},r}$ for each $x\in C_{x_{0},r}$
and so we have $T(C_{x_{0},r})\subset C_{x_{0},r}$ $($see Figure \ref{fig:2}
for the geometric interpretation of the conditions $(C1)^{\ast }$ and $%
(C2)^{\ast })$.

\begin{figure}[t]
\centering
\begin{subfigure}{.5\textwidth}
  \centering
  \includegraphics[width=.4\linewidth]{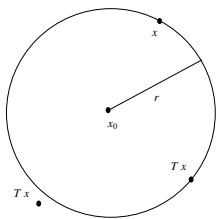}
  \caption{\Small The condition $(C1)^{*}$.}
  \label{fig:1A}
\end{subfigure}
\begin{subfigure}{.5\textwidth}
  \centering
  \includegraphics[width=.4\linewidth]{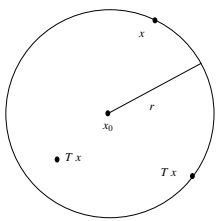}
  \caption{\Small  The condition $(C2)^{*}$.}
  \label{fig:1B}
\end{subfigure}
\begin{subfigure}{.5\textwidth}
  \centering
  \includegraphics[width=.4\linewidth]{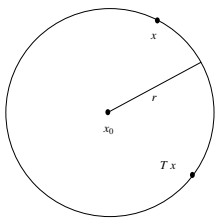}
  \caption{\Small  The condition $(C1)^{*} \cap (C2)^{*}$.}
  \label{fig:1B}
\end{subfigure}
\caption{\Small The geometric presentation of the conditions $(C1)^{\ast }$
and $(C2)^{\ast }$.}
\label{fig:2}
\end{figure}
\end{remark}

Now we give some fixed-circle examples.

\begin{example}
\label{exm8} Let $(X,d)$ be a metric space and $\alpha $ be a constant such
that%
\begin{equation*}
d(\alpha ,x_{0})<r\text{.}
\end{equation*}%
Let us consider a circle $C_{x_{0},r}$ and define the self-mapping $%
T:X\rightarrow X$ as%
\begin{equation*}
Tx=\left\{
\begin{array}{ccc}
x & \text{;} & x\in C_{x_{0},r} \\
\alpha & \text{;} & \text{otherwise}%
\end{array}%
\right. \text{,}
\end{equation*}%
for all $x\in X$. Then it can be easily checked that the conditions $%
(C1)^{\ast }$ and $(C2)^{\ast }$ are satisfied. Clearly $C_{x_{0},r}$ is a
fixed circle of the self-mapping $T$.
\end{example}

\begin{example}
\label{exm9} Let $(%
\mathbb{R}
,d)$ be the usual metric space and $C_{0,1}$ be the unit circle on $%
\mathbb{R}
$. Let us define the self-mapping $T:%
\mathbb{R}
\rightarrow
\mathbb{R}
$ as%
\begin{equation*}
Tx=\left\{
\begin{array}{ccc}
\dfrac{1}{x} & \text{;} & x\in C_{0,1} \\
5 & \text{;} & \text{otherwise}%
\end{array}%
\right. \text{,}
\end{equation*}%
for all $x\in
\mathbb{R}
$. Then the self-mapping $T$ satisfies the conditions $(C1)^{\ast }$ and $%
(C2)^{\ast }$. Hence $C_{0,1}$ is the fixed circle of $T$. Notice that the
fixed circle $C_{0,1}$ is not unique. $C_{3,2}$ and $C_{2,3}$ are also fixed
circles of $T$. It can be easily verified that $T$ satisfies the conditions $%
(C1)^{\ast }$ and $(C2)^{\ast }$ for the circles $C_{3,2}$ and $C_{2,3}$.
\end{example}

In the following example we give an example of a self-mapping which
satisfies the condition $(C2)^{\ast }$ and does not satisfy the condition $%
(C1)^{\ast }$.

\begin{example}
\label{exm10} Let $(X,d)$ be any metric space and $C_{x_{0},r}$ be any
circle on $X$. Let $\alpha $ be chosen such that $d(\alpha ,x_{0})=\rho <r$
and consider the self-mapping $T:X\rightarrow X$ defined by%
\begin{equation*}
Tx=\alpha \text{,}
\end{equation*}%
for all $x\in X$. Then the self-mapping $T$ satisfies the condition $%
(C2)^{\ast }$ but does not satisfy the condition $(C1)^{\ast }$. Clearly $T$
does not fix the circle $C_{x_{0},r}$.
\end{example}

In the following example we give an example of a self-mapping which
satisfies the condition $(C1)^{\ast }$ and does not satisfy the condition $%
(C2)^{\ast }$.

\begin{example}
\label{exm11} Let $(%
\mathbb{R}
,d)$ be the usual metric space and $C_{0,1}$ be the unit circle on $%
\mathbb{R}
$. Let us define the self-mapping $T:%
\mathbb{R}
\rightarrow
\mathbb{R}
$ as%
\begin{equation*}
Tx=\left\{
\begin{array}{ccc}
-5 & \text{;} & x=-1 \\
5 & \text{;} & x=1 \\
10 & \text{;} & \text{otherwise}%
\end{array}%
\right. \text{,}
\end{equation*}%
for all $x\in
\mathbb{R}
$. Then the self-mapping $T$ satisfies the condition $(C1)^{\ast }$ but does
not satisfy the condition $(C2)^{\ast }$. Clearly $T$ does not fix the
circle $C_{0,1}$ $($or any circle$)$.
\end{example}

Using the inequality (\ref{caristi_eqn}), we give another existence
fixed-circle theorem on a metric space.

\begin{theorem}
\label{thm7} Let $(X,d)$ be a metric space and $C_{x_{0},r}$ be any circle
on $X$. Let the mapping $\varphi $ be defined as $($\ref{phi function}$)$.
If there exists a self-mapping $T:X\rightarrow X$ satisfying

$(C1)^{\ast \ast }$ $d(x,Tx)\leq \varphi (x)-\varphi (Tx)$,\newline
and

$(C2)^{\ast \ast }$ $hd(x,Tx)+d(Tx,x_{0})\geq r$,\newline
for each $x\in C_{x_{0},r}$ and some $h\in \lbrack 0,1)$, then $C_{x_{0},r}$
is a fixed circle of $T$.
\end{theorem}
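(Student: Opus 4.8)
The plan is to mirror almost exactly the proofs of Theorem~\ref{thm1} and Theorem~\ref{thm5}. Since the condition $(C1)^{\ast\ast}$ is identical to $(C1)$ of Theorem~\ref{thm1}, I expect that the essential geometric work is already done: $(C1)^{\ast\ast}$ should force $Tx$ to lie on or inside the circle, while $(C2)^{\ast\ast}$ is a relaxed version of $(C2)$ designed to force $Tx$ onto or outside the circle, the slack being the extra term $hd(x,Tx)$ with $h\in[0,1)$. The anticipated strategy is therefore to combine the two inequalities to pin down $d(Tx,x_0)=r$ and conclude $Tx=x$.

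\begin{proof}
Consider the mapping $\varphi$ defined in $(\ref{phi function})$. Let $x\in C_{x_{0},r}$ be any arbitrary point. Using the condition $(C1)^{\ast\ast}$ we obtain
\begin{eqnarray}
d(x,Tx) &\leq &\varphi (x)-\varphi (Tx)=d(x,x_{0})-d(Tx,x_{0})  \label{eqn3} \\
&=&r-d(Tx,x_{0})\text{.}  \notag
\end{eqnarray}
Hence $d(Tx,x_{0})\leq r$, so the point $Tx$ lies on or interior of the circle $C_{x_{0},r}$. On the other hand, combining $(C1)^{\ast\ast}$ with $(C2)^{\ast\ast}$ we have
\begin{equation*}
r\leq hd(x,Tx)+d(Tx,x_{0})\leq h\bigl(r-d(Tx,x_{0})\bigr)+d(Tx,x_{0})\text{,}
\end{equation*}
whence $r(1-h)\leq d(Tx,x_{0})(1-h)$. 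Since $h\in[0,1)$ we have $1-h>0$, and so $d(Tx,x_{0})\geq r$. Together with the reverse inequality obtained above, this gives $d(Tx,x_{0})=r$. Substituting into $(\ref{eqn3})$ we get
\begin{equation*}
d(x,Tx)\leq r-d(Tx,x_{0})=r-r=0
\end{equation*}
and so $Tx=x$. Consequently, $C_{x_{0},r}$ is a fixed circle of $T$.
\end{proof}

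The step I expect to be the crux is the second chain of inequalities, where the slack term $hd(x,Tx)$ in $(C2)^{\ast\ast}$ is controlled by feeding in the bound $d(x,Tx)\leq r-d(Tx,x_0)$ coming from $(C1)^{\ast\ast}$; the factor $1-h$ must then be cancelled, which is precisely where the hypothesis $h<1$ is indispensable. If instead $h=1$ the cancellation would be unavailable and one could only conclude $r\leq r$, giving no information, so the strict inequality $h<1$ is exactly what makes the argument close. Everything else is a routine reprise of the preceding two theorems.
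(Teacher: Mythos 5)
Your proof is correct and rests on essentially the same idea as the paper's: chaining the inequalities $(C1)^{\ast\ast}$ and $(C2)^{\ast\ast}$ and cancelling the factor $1-h>0$. The paper merely organizes the algebra differently (assuming $Tx\neq x$ and deriving $d(x,Tx)\leq hd(x,Tx)$ directly, rather than first pinning down $d(Tx,x_{0})=r$), so the substance is identical.
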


\begin{proof}
We consider the mapping $\varphi $ defined in $($\ref{phi function}$)$.
Assume that $x\in C_{x_{0},r}$ and $Tx\neq x$. Then using the conditions $%
(C1)^{\ast \ast }$ and $(C2)^{\ast \ast }$ we obtain%
\begin{eqnarray*}
d(x,Tx) &\leq &\varphi (x)-\varphi (Tx)=d(x,x_{0})-d(Tx,x_{0}) \\
&=&r-d(Tx,x_{0}) \\
&\leq &hd(x,Tx)+d(Tx,x_{0})-d(Tx,x_{0}) \\
&=&hd(x,Tx)\text{,}
\end{eqnarray*}%
which is a contradiction with our assumption since $h\in \lbrack 0,1)$.
Therefore we get $Tx=x$ and $C_{x_{0},r}$ is a fixed circle of $T$.
\end{proof}

\begin{remark}
\label{rem5} Notice that the condition $(C1)^{\ast \ast }$ guarantees that $%
Tx$ is not in the exterior of the circle $C_{x_{0},r}$ for each $x\in
C_{x_{0},r}$. The condition $(C2)^{\ast \ast }$ guarantees that $Tx$ should
be lies on or exterior or interior of the circle $C_{x_{0},r}$.
Consequently, $Tx$ should be lies on or interior of the circle $C_{x_{0},r}$
$($see Figure \ref{fig:3} for the geometric interpretation of the conditions
$(C1)^{\ast \ast }$ and $(C2)^{\ast \ast })$.

\begin{figure}[t]
\centering
\begin{subfigure}{.5\textwidth}
  \centering
  \includegraphics[width=.4\linewidth]{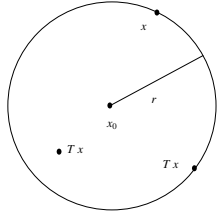}
  \caption{\Small The condition $(C1)^{**}$.}
  \label{fig:1A}
\end{subfigure}
\begin{subfigure}{.5\textwidth}
  \centering
  \includegraphics[width=.4\linewidth]{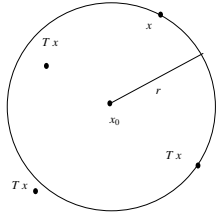}
  \caption{\Small  The condition $(C2)^{**}$.}
  \label{fig:1B}
\end{subfigure}
\begin{subfigure}{.5\textwidth}
  \centering
  \includegraphics[width=.4\linewidth]{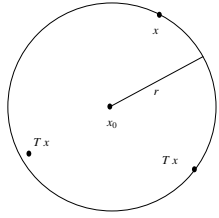}
  \caption{\Small  The condition $(C1)^{**} \cap (C2)^{**}$.}
  \label{fig:1B}
\end{subfigure}
\caption{\Small The geometric presentation of the conditions $(C1)^{\ast
\ast }$ and $(C2)^{\ast \ast }$.}
\label{fig:3}
\end{figure}
\end{remark}

\begin{example}
\label{exm13} Let $X=%
\mathbb{R}
$ and the mapping $d:X^{2}\rightarrow \lbrack 0,\infty )$ be defined as%
\begin{equation*}
d(x,y)=\left\vert e^{x}-e^{y}\right\vert \text{,}
\end{equation*}%
for all $x\in
\mathbb{R}
$. Then $(%
\mathbb{R}
,d)$ be a metric space. Let us consider the circle $C_{0,1}=\{\ln 2\}$ and
define the self-mapping $T:%
\mathbb{R}
\rightarrow
\mathbb{R}
$ as%
\begin{equation*}
Tx=\left\{
\begin{array}{ccc}
\ln 2 & ; & x\in C_{0,1} \\
1 & ; & \text{otherwise}%
\end{array}%
\right. \text{,}
\end{equation*}%
for all $x\in
\mathbb{R}
$. Then it can be easily checked that the conditions $(C1)^{\ast \ast }$ and
$(C2)^{\ast \ast }$ are satisfied. Hence the unit circle $C_{0,1}$ is a
fixed circle of $T$.
\end{example}

In the following example we give an example of a self-mapping which
satisfies the condition $(C1)^{\ast \ast }$ and does not satisfy the
condition $(C2)^{\ast \ast }$.

\begin{example}
\label{exm14} Let $(%
\mathbb{R}
,d)$ be the usual metric space. Let us consider the circle $C_{2,4}=\{-2,6\}$
and define the self-mapping $T:%
\mathbb{R}
\rightarrow
\mathbb{R}
$ as%
\begin{equation*}
Tx=\left\{
\begin{array}{ccc}
2 & \text{;} & x\in C_{2,4} \\
6 & \text{;} & \text{otherwise}%
\end{array}%
\right. \text{,}
\end{equation*}%
for all $x\in
\mathbb{R}
$. Then the self-mapping $T$ satisfies the condition $(C1)^{\ast \ast }$ but
does not satisfy the condition $(C2)^{\ast \ast }$. Clearly $T$ does not fix
the circle $C_{2,4}$ $($or any circle$)$.
\end{example}

In the following example we give an example of a self-mapping which
satisfies the condition $(C2)^{\ast \ast }$ and does not satisfy the
condition $(C1)^{\ast \ast }$.

\begin{example}
\label{exm15} Let $(%
\mathbb{R}
,d)$ be the usual metric space. Let us consider the circle $C_{0,2}$ and
define the self-mapping $T:%
\mathbb{R}
\rightarrow
\mathbb{R}
$ as%
\begin{equation*}
Tx=2\text{,}
\end{equation*}%
for all $x\in
\mathbb{R}
$. Then the self-mapping $T$ satisfies the condition $(C2)^{\ast \ast }$ but
does not satisfy the condition $(C1)^{\ast \ast }$. Clearly $T$ does not fix
the circle $C_{0,2}$ $($or any circle$)$.
\end{example}

\begin{example}
\label{exm12} Let $X=%
\mathbb{R}
$ and the mapping $d:X^{2}\rightarrow \lbrack 0,\infty )$ be defined as%
\begin{equation*}
d(x,y)=\left\{
\begin{array}{ccc}
0 & ; & x=y \\
\left\vert x\right\vert +\left\vert y\right\vert & ; & x\neq y%
\end{array}%
\right. \text{,}
\end{equation*}%
for all $x\in
\mathbb{R}
$. Then $(%
\mathbb{R}
,d)$ be a metric space. Let us define the self-mapping $T:%
\mathbb{R}
\rightarrow
\mathbb{R}
$ as%
\begin{equation*}
Tx=\left\{
\begin{array}{ccc}
\dfrac{1}{2} & ; & x\in \{-1,1\} \\
0 & ; & \text{otherwise}%
\end{array}%
\right. \text{,}
\end{equation*}%
for all $x\in
\mathbb{R}
$. Then the self-mapping $T$ does not satisfy the condition $(C1)^{\ast }$
but satisfies the condition $(C2)^{\ast }$ for the circle $C_{1,2}$. Hence $%
T $ does not fix the circle $C_{1,2}$. On the other hand it can be easily
checked that $T$ satisfies both of the conditions $(C1)^{\ast }$ and $%
(C2)^{\ast }$ for the circle $C_{1,1}$ and so fixes $C_{1,1}$. Actually
notice that $T$ fixes all of the circles centered at $x_{0}=a\in
\mathbb{R}
^{+}$ with radius $a$.
\end{example}

Let $I_{X}:X\rightarrow X$ be the identity map defined as $I_{X}(x)=x$ for
all $x\in X.$ Notice that the identity map satisfies the conditions $(C1)$
and $(C2)$ (resp. $(C1)^{\ast }$ and $(C2)^{\ast }$, $(C1)^{\ast \ast }$ and
$(C2)^{\ast \ast }$) in Theorem \ref{thm1} (resp. Theorem \ref{thm5} and
Theorem \ref{thm7}). Now we investigate a condition which excludes $I_{X}$
in Theorem \ref{thm1}, Theorem \ref{thm5} and Theorem \ref{thm7}. We give
the following theorem.

\begin{theorem}
\label{thm9} Let $(X,d)$ be a metric space and $C_{x_{0},r}$ be any circle
on $X$. Let the mapping $\varphi $ be defined as $(\ref{phi function})$. If
a self-mapping $T:X\rightarrow X$ satisfies the condition%
\begin{equation*}
(I_{d})\text{ \ \ \ \ \ \ }d(x,Tx)\leq \dfrac{\varphi (x)-\varphi (Tx)}{h}%
\text{,}
\end{equation*}%
for all $x\in X$ and some $h>1$ then $T=I_{X}$ and $C_{x_{0},r}$ is a fixed
circle of $T$.
\end{theorem}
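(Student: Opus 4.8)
The plan is to show that the hypothesis $(I_d)$, imposed now at \emph{every} point of $X$ rather than only on the circle, is so restrictive that it forces $Tx=x$ for all $x\in X$; once $T=I_X$ is established, the fixed-circle conclusion is automatic. The essential observation is that the numerator $\varphi(x)-\varphi(Tx)$ appearing on the right-hand side of $(I_d)$ is itself bounded above by $d(x,Tx)$. Indeed, by the triangle inequality applied to the path from $x$ to $x_0$ through $Tx$,
\begin{equation*}
\varphi(x)-\varphi(Tx)=d(x,x_0)-d(Tx,x_0)\leq d(x,Tx).
\end{equation*}

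Substituting this bound into $(I_d)$ I would obtain, for an arbitrary $x\in X$,
\begin{equation*}
d(x,Tx)\leq \frac{\varphi(x)-\varphi(Tx)}{h}\leq \frac{d(x,Tx)}{h},
\end{equation*}
whence $(h-1)\,d(x,Tx)\leq 0$. Since $h>1$ forces $h-1>0$, this yields $d(x,Tx)\leq 0$, so $d(x,Tx)=0$ and therefore $Tx=x$. As $x\in X$ was arbitrary, $T=I_X$, and in particular $Tx=x$ for every $x\in C_{x_0,r}$, so $C_{x_0,r}$ is a fixed circle of $T$ in the sense of Definition \ref{def1}.

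The argument is short and I do not anticipate a genuine obstacle; the only point requiring care is the direction of the triangle inequality. One should note that $(I_d)$ already presupposes $\varphi(x)\geq\varphi(Tx)$, since the left-hand side $d(x,Tx)$ is nonnegative, so the comparison above is consistent and the division by $h$ is harmless. It is worth emphasizing, in contrast to Theorem \ref{thm1}, Theorem \ref{thm5} and Theorem \ref{thm7}, that here the inequality is required to hold on all of $X$ and the strict bound $h>1$ (rather than $h\in[0,1)$) is exactly what collapses $T$ onto the identity.
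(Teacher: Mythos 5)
Your proof is correct and is essentially identical to the paper's: both rest on the triangle-inequality bound $\varphi(x)-\varphi(Tx)=d(x,x_0)-d(Tx,x_0)\leq d(x,Tx)$, which combined with $(I_d)$ gives $(h-1)\,d(x,Tx)\leq 0$ and hence $Tx=x$ for every $x\in X$. The only cosmetic difference is that the paper phrases it as a contradiction from the assumption $Tx\neq x$, while you argue directly.
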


\begin{proof}
Let $x\in X$ and $Tx\neq x$. Then using the inequality $(I_{d})$ and the
triangle inequality we get%
\begin{eqnarray*}
hd(x,Tx) &\leq &\varphi (x)-\varphi (Tx) \\
&=&d(x,x_{0})-d(Tx,x_{0}) \\
&\leq &d(x,Tx)+d(Tx,x_{0})-d(Tx,x_{0}) \\
&=&d(x,Tx)
\end{eqnarray*}%
and so%
\begin{equation*}
(h-1)d(x,Tx)\leq 0\text{,}
\end{equation*}%
which is a contradiction since $h>1$. Hence we obtain $Tx=x$ and $T=I_{X}$.
Consequently, $C_{x_{0},r}$ is a fixed circle of $T$.
\end{proof}

Notice that the converse statement of this theorem is also true. Hence if a
self-mapping $T$ in Theorem \ref{thm1} (resp. Theorem \ref{thm5} and Theorem %
\ref{thm7}) does not satisfy the condition $(I_{d})$ given in Theorem \ref%
{thm9} then $T$ can not be the identity map.

Considering the above examples we see that our existence theorems are
depending on the given circle (and so the metric on $X$). Also fixed circle
should not to be unique as seen in Example \ref{exm9}. Therefore it is
necessary and important to determine some uniqueness theorems for fixed
circles.

\section{\textbf{Some uniqueness theorems}}

\label{sec:2} In this section we investigate the uniqueness of the fixed
circles in theorems obtained in Section \ref{sec:1}. Notice that the fixed
circle $C_{x_{0},r}$ is not necessarily unique in Theorem \ref{thm1} (resp.
Theorem \ref{thm5} and Theorem \ref{thm7}). We can give the following result.

\begin{proposition}
\label{prop1} Let $(X,d)$ be a metric space. For any given circles $%
C_{x_{0},r}$ and $C_{x_{1},\rho }$, there exists at least one self-mapping $%
T $ of $X$ such that $T$ fixes the circles $C_{x_{0},r}$ and $C_{x_{1},\rho
} $.
\end{proposition}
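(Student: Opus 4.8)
The plan is to prove the statement by an explicit construction, since fixing a circle pointwise is a very mild requirement. The key observation is that, by Definition \ref{def1}, a self-mapping $T$ fixes a circle exactly when it restricts to the identity on that circle. Hence, to fix both $C_{x_{0},r}$ and $C_{x_{1},\rho }$ simultaneously, it suffices to force $T$ to agree with the identity on the union $C_{x_{0},r}\cup C_{x_{1},\rho }$ and to prescribe $T$ arbitrarily on the remaining points of $X$. The most trivial witness is of course the identity map $I_{X}$, which fixes every circle; but in keeping with the theme of this section I would instead exhibit a nontrivial example, in the spirit of Example \ref{exm1} and Example \ref{exm8}.

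Concretely, I would fix any point $\alpha \in X$ and define
\[
Tx=\left\{
\begin{array}{ccc}
x & ; & x\in C_{x_{0},r}\cup C_{x_{1},\rho } \\
\alpha & ; & \text{otherwise}
\end{array}
\right. \text{,}
\]
for all $x\in X$. This is a well-defined self-mapping of $X$: the two branches are governed by the single membership test $x\in C_{x_{0},r}\cup C_{x_{1},\rho }$, so there is no ambiguity even when the two circles coincide or intersect, because on their union $T$ is the identity in either case. The verification is then immediate. If $x\in C_{x_{0},r}$ then $x$ lies in the union, whence $Tx=x$, so $T$ fixes $C_{x_{0},r}$; the identical argument applied to $C_{x_{1},\rho }$ shows that $T$ fixes $C_{x_{1},\rho }$ as well. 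Thus a single self-mapping of $X$ fixes both prescribed circles, which is exactly the assertion of the proposition.

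There is essentially no hard step here; the real content of the statement is the qualitative fact that the fixed circle produced by the existence theorems of Section \ref{sec:1} need not be unique, and the proof merely supplies one explicit witness. If one wishes to emphasize nontriviality, it is enough to note that the map above differs from $I_{X}$ whenever $X\neq C_{x_{0},r}\cup C_{x_{1},\rho }$, and one may further check, just as in Example \ref{exm1} and Example \ref{exm8}, that for a suitable choice of $\alpha$ this $T$ also satisfies the hypotheses of those existence theorems for each of the two circles.
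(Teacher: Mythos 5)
Your proof is correct and uses essentially the same construction as the paper: the self-mapping that is the identity on $C_{x_{0},r}\cup C_{x_{1},\rho }$ and sends every other point to a fixed constant $\alpha $. The only cosmetic differences are that the paper additionally requires $d(\alpha ,x_{0})\neq r$ and $d(\alpha ,x_{1})\neq \rho $ and concludes by verifying the conditions $(C1)$ and $(C2)$ of Theorem \ref{thm1} for each circle, whereas you check $Tx=x$ on each circle directly from Definition \ref{def1}; both verifications are immediate.
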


\begin{proof}
Let $C_{x_{0},r}$ and $C_{x_{1},\rho }$ be any circles on $X$. Let us define
the self-mapping $T:X\rightarrow X$ as%
\begin{equation}
Tx=\left\{
\begin{array}{ccc}
x & \text{;} & x\in C_{x_{0},r}\cup C_{x_{1},\rho } \\
\alpha & \text{;} & \text{otherwise}%
\end{array}%
\right. \text{,}  \label{definition of T}
\end{equation}%
for all $x\in X$, where $\alpha $ is a constant satisfying $d(\alpha
,x_{0})\neq r$ and $d(\alpha ,x_{1})\neq \rho $. Let us define the mappings $%
\varphi _{1},\varphi _{2}:X\rightarrow \lbrack 0,\infty )$ as
\begin{equation*}
\varphi _{1}(x)=d(x,x_{0})
\end{equation*}%
and
\begin{equation*}
\varphi _{2}(x)=d(x,x_{1})\text{,}
\end{equation*}%
for all $x\in X$. Then it can be easily checked that the conditions $(C1)$
and $(C2)$ are satisfied by $T$ for the circles $C_{x_{0},r}$ and $%
C_{x_{1},\rho }$ with the mappings $\varphi _{1}(x)$ and $\varphi _{2}(x)$,
respectively. Clearly $C_{x_{0},r}$ and $C_{x_{1},\rho }$ are the fixed
circles of $T$ by Theorem \ref{thm1}.
\end{proof}

Notice that the circles $C_{x_{0},r}$ and $C_{x_{1},\rho }$ do not have to
be disjoint (see Example \ref{exm9}).

\begin{remark}
\label{rem1} Let $(X,d)$ be a metric space and $C_{x_{0},r}$, $C_{x_{1},\rho
}$ be two circles on $X$. If we consider the self-mapping $T$ defined in $($%
\ref{definition of T}$)$, then the conditions $(C1)^{\ast }$ and $(C2)^{\ast
}$ are satisfied by $T$ for the circles $C_{x_{0},r}$ and $C_{x_{1},\rho }$
with the mappings $\varphi _{1}(x)$ and $\varphi _{2}(x)$, respectively.
Clearly $C_{x_{0},r}$ and $C_{x_{1},\rho }$ are the fixed circles of $T$ by
Theorem \ref{thm5}. Similarly, the self-mapping $T$ in $($\ref{definition of
T}$)$ satisfies the conditions $(C1)^{\ast \ast }$ and $(C2)^{\ast \ast }$
for the circles $C_{x_{0},r}$ and $C_{x_{1},\rho }$ with the mappings $%
\varphi _{1}(x)$ and $\varphi _{2}(x)$, respectively.
\end{remark}

\begin{corollary}
\label{cor1} Let $(X,d)$ be a metric space. For any given circles $%
C_{x_{1},r_{1}}$,$\cdots $, $C_{x_{n},r_{n}}$, there exists at least one
self-mapping $T$ of $X$ such that $T$ fixes the circles $C_{x_{1},r_{1}}$,$%
\cdots $, $C_{x_{n},r_{n}}$.
\end{corollary}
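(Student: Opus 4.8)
The plan is to follow the proof of Proposition~\ref{prop1} essentially verbatim, replacing the two-circle construction by an $n$-circle one. First I would define the self-mapping $T:X\rightarrow X$ by
\begin{equation*}
Tx=\left\{
\begin{array}{ccc}
x & \text{;} & x\in \bigcup_{i=1}^{n}C_{x_{i},r_{i}} \\
\alpha & \text{;} & \text{otherwise}
\end{array}
\right. \text{,}
\end{equation*}
for all $x\in X$, where $\alpha$ is a constant chosen so that $d(\alpha ,x_{i})\neq r_{i}$ for every $i\in\{1,\dots ,n\}$; that is, $\alpha$ is a point lying off each of the given circles. For each index $i$ I would then introduce the auxiliary function $\varphi _{i}:X\rightarrow[0,\infty)$ defined by $\varphi _{i}(x)=d(x,x_{i})$, which is exactly the function~(\ref{phi function}) with center $x_{i}$.

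The key step is to verify that, for each $i$, the self-mapping $T$ satisfies the conditions $(C1)$ and $(C2)$ of Theorem~\ref{thm1} relative to the circle $C_{x_{i},r_{i}}$ and the function $\varphi _{i}$. For any $x\in C_{x_{i},r_{i}}$ we have $x\in\bigcup_{j=1}^{n}C_{x_{j},r_{j}}$ and hence $Tx=x$, so that $d(x,Tx)=0=\varphi _{i}(x)-\varphi _{i}(Tx)$, which gives $(C1)$, while $d(Tx,x_{i})=d(x,x_{i})=r_{i}\geq r_{i}$, which gives $(C2)$. Applying Theorem~\ref{thm1} once for each $i\in\{1,\dots ,n\}$ then yields that every circle $C_{x_{i},r_{i}}$ is a fixed circle of the single self-mapping $T$, which is the assertion of the corollary.

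The only delicate point is the existence of a suitable constant $\alpha$, which presupposes that the union $\bigcup_{i=1}^{n}C_{x_{i},r_{i}}$ does not exhaust $X$. I do not expect this to be a genuine obstacle: if the union happens to cover all of $X$, then $T=I_{X}$ already fixes each of the circles, so the conclusion holds trivially, and otherwise any $\alpha$ outside the union works. One could alternatively obtain the corollary by induction on $n$ starting from Proposition~\ref{prop1}, but the direct construction above is cleaner and sidesteps the question of whether the maps produced at successive inductive stages can be merged into one self-mapping.
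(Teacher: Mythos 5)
Your proposal is correct and is essentially identical to the paper's own justification: the paper proves the corollary via the explicit construction in Example \ref{exm7}, which defines exactly the same map $T$ (identity on $\bigcup_{i=1}^{n}C_{x_{i},r_{i}}$, constant $\alpha$ with $d(\alpha ,x_{i})\neq r_{i}$ elsewhere) and verifies $(C1)$ and $(C2)$ for each circle with $\varphi _{i}(x)=d(x,x_{i})$ before invoking Theorem \ref{thm1}. Your additional remark handling the degenerate case where the circles exhaust $X$ is a small but sensible refinement the paper omits.
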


\begin{example}
\label{exm7} Let $(X,d)$ be a metric space and $C_{x_{1},r_{1}}$,$\cdots $, $%
C_{x_{n},r_{n}}$ be any circles on $X$. Let $\alpha $ be a constant such that%
\begin{equation*}
d(\alpha ,x_{i})\neq r_{i}\text{ }(1\leq i\leq n)\text{.}
\end{equation*}%
Let us define the self-mapping $T:X\rightarrow X$ by%
\begin{equation*}
Tx=\left\{
\begin{array}{ccc}
x & \text{;} & x\in \bigcup\limits_{i=1}^{n}C_{x_{i},r_{i}} \\
\alpha & \text{;} & \text{otherwise}%
\end{array}%
\right. \text{,}
\end{equation*}%
for all $x\in X$ and the mappings $\varphi _{i}:X\rightarrow \lbrack
0,\infty )$ as
\begin{equation*}
\varphi _{i}(x)=d(x,x_{i})\text{ }(1\leq i\leq n).
\end{equation*}%
Then it can be easily checked that the conditions $(C1)$ and $(C2)$ are
satisfied by $T$ for the circles $C_{x_{1},r_{1}}$,$\cdots $, $%
C_{x_{n},r_{n}}$, respectively. Consequently, $C_{x_{1},r_{1}}$,$\cdots $, $%
C_{x_{n},r_{n}}$ are fixed circles of $T$ by Theorem \ref{thm1}. Notice that
these circles do not have to be disjoint.
\end{example}

Therefore it is important to investigate the uniqueness of the fixed
circles. Now we determine the uniqueness conditions for the fixed circles in
Theorem \ref{thm1}.

\begin{theorem}
\label{thm2} Let $(X,d)$ be a metric space and $C_{x_{0},r}$ be any circle
on $X$. Let $T:X\rightarrow X$ be a self-mapping satisfying the conditions $%
(C1)$ and $(C2)$ given in Theorem \ref{thm1}. If the contraction condition%
\begin{equation}
(C3)\text{ \ \ \ \ \ }d(Tx,Ty)\leq hd(x,y)\text{,}  \label{C3}
\end{equation}%
is satisfied for all $x\in C_{x_{0},r}$, $y\in X\setminus C_{x_{0},r}$ and
some $h\in \lbrack 0,1)$ by $T$, then $C_{x_{0},r}$ is the unique fixed
circle of $T$.
\end{theorem}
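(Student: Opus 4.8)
The plan is to invoke Theorem~\ref{thm1} for existence and then establish uniqueness by contradiction, with the entire argument resting on the contraction hypothesis $(C3)$. Since $(C1)$ and $(C2)$ are assumed, Theorem~\ref{thm1} already guarantees that $C_{x_0,r}$ is a fixed circle, so every $x\in C_{x_0,r}$ satisfies $Tx=x$; it remains only to rule out any \emph{other} circle fixed by $T$.

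The key observation I would isolate first is that $(C3)$ forbids $T$ from having any fixed point lying off $C_{x_0,r}$. Suppose $y\in X\setminus C_{x_0,r}$ were a fixed point, i.e. $Ty=y$, and pick any $x\in C_{x_0,r}$, so that $Tx=x$. Because $x$ and $y$ lie in the disjoint sets $C_{x_0,r}$ and $X\setminus C_{x_0,r}$, we have $x\neq y$ and hence $d(x,y)>0$. Feeding these into $(C3)$ yields
\begin{equation*}
d(x,y)=d(Tx,Ty)\leq h\,d(x,y),
\end{equation*}
so that $(1-h)\,d(x,y)\leq 0$; since $h\in[0,1)$ this is impossible. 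Thus every fixed point of $T$ must lie on $C_{x_0,r}$.

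With this in hand, uniqueness follows quickly: if $C_{x_1,\rho}$ is any fixed circle of $T$, then each of its points is a fixed point of $T$ and hence lies on $C_{x_0,r}$, so assuming $C_{x_1,\rho}\neq C_{x_0,r}$ would produce a point of $C_{x_1,\rho}$ lying in $X\setminus C_{x_0,r}$, contradicting the previous paragraph. I expect the only genuinely delicate point to be precisely this step: guaranteeing that a distinct second fixed circle actually furnishes a witness point in $X\setminus C_{x_0,r}$, which is what makes $(C3)$ applicable. Indeed $(C3)$ is only assumed for $x\in C_{x_0,r}$ and $y\in X\setminus C_{x_0,r}$, so a fixed point already sitting on $C_{x_0,r}$ yields no information; the contradiction requires a fixed point strictly outside. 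Everything else—the single application of $(C3)$ and the arithmetic with $h$—is routine.
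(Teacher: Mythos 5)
Your proof is correct and follows essentially the same route as the paper: both derive uniqueness from the single inequality $d(Tx,Ty)\le h\,d(x,y)$ applied to a point of $C_{x_{0},r}$ paired with a point of a putative second fixed circle. If anything you are more careful than the paper's own argument, which applies $(C3)$ to arbitrary $u\in C_{x_{0},r}$, $v\in C_{x_{1},\rho }$ without first checking that $v\notin C_{x_{0},r}$ (needed for $(C3)$ to be applicable at all) and without noting that $u\neq v$ is required to turn $d(u,v)\le h\,d(u,v)$ into a genuine contradiction --- two points that your reformulation via ``every fixed point of $T$ lies on $C_{x_{0},r}$'' handles cleanly.
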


\begin{proof}
Assume that there exist two fixed circles $C_{x_{0},r}$ and $C_{x_{1},\rho }$
of the self-mapping $T$, that is, $T$ satisfy the conditions $(C1)$ and $%
(C2) $ for each circles $C_{x_{0},r}$ and $C_{x_{1},\rho }$. Let $u\in
C_{x_{0},r} $ and $v\in C_{x_{1},\rho }$ be arbitrary points. We show that $%
d(u,v)=0$ and hence $u=v$. Using the condition $(C3)$ we have%
\begin{equation*}
d(u,v)=d(Tu,Tv)\leq hd(u,v)\text{,}
\end{equation*}%
which is a contradiction since $h\in \lbrack 0,1)$. Consequently, $%
C_{x_{0},r}$ is the unique fixed circle of $T$.
\end{proof}

Notice that the self-mapping $T$ given in the proof of Proposition \ref%
{prop1} does not satisfy the contraction condition $(C3)$.

We give a uniqueness condition for the fixed circles in Theorem \ref{thm5}.

\begin{theorem}
\label{thm6} Let $(X,d)$ be a metric space and $C_{x_{0},r}$ be any circle
on $X$. Let $T:X\rightarrow X$ be a self-mapping satisfying the conditions $%
(C1)^{\ast }$ and $(C2)^{\ast }$ given in Theorem \ref{thm5}. If the
contraction condition $(C3)$ defined in $($\ref{C3}$)$ is satisfied for all $%
x\in C_{x_{0},r}$, $y\in X\setminus C_{x_{0},r}$ and some $h\in \lbrack 0,1)$
by $T$ then $C_{x_{0},r}$ is the unique fixed circle of $T$.
\end{theorem}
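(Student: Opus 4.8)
The plan is to mirror the uniqueness argument already used for Theorem~\ref{thm2}, since the hypotheses of Theorem~\ref{thm6} differ from those of Theorem~\ref{thm2} only in replacing the existence conditions $(C1)$, $(C2)$ by $(C1)^{\ast}$, $(C2)^{\ast}$, while the contraction condition $(C3)$ is identical. The key observation is that the existence conditions play no role whatsoever in establishing uniqueness: they are needed (via Theorem~\ref{thm5}) only to guarantee that $C_{x_{0},r}$ actually \emph{is} a fixed circle, whereas uniqueness follows purely from $(C3)$.

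First I would argue by contradiction, supposing that $T$ possesses a second fixed circle $C_{x_{1},\rho}$ distinct from $C_{x_{0},r}$, where $T$ satisfies $(C1)^{\ast}$ and $(C2)^{\ast}$ for both circles (so that both are genuinely fixed by Theorem~\ref{thm5}). Then I would pick arbitrary points $u\in C_{x_{0},r}$ and $v\in C_{x_{1},\rho}$. If the two circles are genuinely distinct, I can choose such points with $u\neq v$, so that in particular $v\in X\setminus C_{x_{0},r}$, which is exactly the domain on which $(C3)$ is assumed to hold.

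The central step is the chain of equalities and inequalities
\begin{equation*}
d(u,v)=d(Tu,Tv)\leq hd(u,v),
\end{equation*}
where the first equality uses that $u$ and $v$ are both fixed points of $T$ (being on fixed circles, so $Tu=u$ and $Tv=v$), and the inequality is precisely condition $(C3)$ applied with $x=u\in C_{x_{0},r}$ and $y=v\in X\setminus C_{x_{0},r}$. Since $h\in[0,1)$, this forces $d(u,v)=0$, hence $u=v$, contradicting our choice of distinct points and collapsing the two circles into one.

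I do not anticipate any serious obstacle, as this proof is essentially a verbatim repetition of the argument for Theorem~\ref{thm2}. The only subtlety worth flagging is the need to ensure that a point $v$ lying on the second circle actually falls in $X\setminus C_{x_{0},r}$, so that $(C3)$ is applicable; this is immediate once one assumes the two fixed circles are distinct, since then some point of $C_{x_{1},\rho}$ must lie outside $C_{x_{0},r}$. I would conclude that the existence conditions $(C1)^{\ast}$, $(C2)^{\ast}$ are irrelevant to the uniqueness mechanism, and hence that $C_{x_{0},r}$ is the unique fixed circle of $T$.
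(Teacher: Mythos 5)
Your proposal is correct and is essentially the paper's own proof: the paper simply states that Theorem~\ref{thm6} follows ``by the same arguments used in the proof of Theorem~\ref{thm2},'' which is exactly the contradiction argument you reproduce. Your extra remark that one must pick $v\in C_{x_{1},\rho}\setminus C_{x_{0},r}$ so that $(C3)$ is actually applicable is a small but welcome clarification that the paper leaves implicit.
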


\begin{proof}
It can be easily seen by the same arguments used in the proof of Theorem \ref%
{thm2}.
\end{proof}

Finally we give a uniqueness condition for the fixed circles in Theorem \ref%
{thm7}.

\begin{theorem}
\label{thm8} Let $(X,d)$ be a metric space and $C_{x_{0},r}$ be any circle
on $X$. Let $T:X\rightarrow X$ be a self-mapping satisfying the conditions $%
(C1)^{\ast \ast }$ and $(C2)^{\ast \ast }$ given in Theorem \ref{thm7}. If
the contraction condition%
\begin{equation*}
(C3)^{\ast \ast }\text{ \ \ }d(Tx,Ty)<\max
\{d(x,y),d(x,Tx),d(y,Ty),d(x,Ty),d(y,Tx)\}\text{,}
\end{equation*}%
is satisfied for all $x\in C_{x_{0},r}$, $y\in X\setminus C_{x_{0},r}$ by $T$%
, then $C_{x_{0},r}$ is the unique fixed circle of $T$.
\end{theorem}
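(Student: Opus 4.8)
The plan is to follow exactly the structure of the uniqueness proof for Theorem \ref{thm2}, arguing by contradiction. First I would suppose that, in addition to the fixed circle $C_{x_{0},r}$, the self-mapping $T$ also fixes a second, distinct circle $C_{x_{1},\rho }$; equivalently, that there is some point $v\in X\setminus C_{x_{0},r}$ with $Tv=v$. I would then pick an arbitrary $u\in C_{x_{0},r}$ (so $Tu=u$) and apply the contraction condition $(C3)^{\ast \ast }$ to the pair $(u,v)$, where $u\in C_{x_{0},r}$ and $v\in X\setminus C_{x_{0},r}$ as required by the hypothesis.

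The key computation is to evaluate the five quantities inside the maximum when both points are fixed. Since $Tu=u$ and $Tv=v$, we have $d(u,Tu)=d(u,u)=0$, $d(v,Tv)=d(v,v)=0$, while $d(u,Tv)=d(u,v)$, $d(v,Tu)=d(v,u)=d(u,v)$, and of course $d(u,v)=d(u,v)$. Hence
\begin{equation*}
\max \{d(u,v),d(u,Tu),d(v,Tv),d(u,Tv),d(v,Tu)\}=\max \{d(u,v),0,0,d(u,v),d(u,v)\}=d(u,v).
\end{equation*}
Substituting into $(C3)^{\ast \ast }$ gives $d(Tu,Tv)<d(u,v)$. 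But $d(Tu,Tv)=d(u,v)$ because both points are fixed, so we arrive at the strict inequality $d(u,v)<d(u,v)$, which is impossible unless the maximum is not attained — that is, unless $d(u,v)=0$. The strict inequality in $(C3)^{\ast \ast }$ is precisely what makes the argument work: with $d(u,v)>0$ the maximum equals $d(u,v)$ and the strict bound $d(Tu,Tv)<d(u,v)$ contradicts $d(Tu,Tv)=d(u,v)$, forcing $d(u,v)=0$ and hence $u=v$.

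Since $u\in C_{x_{0},r}$ and $v\in C_{x_{1},\rho }$ were arbitrary, this shows $C_{x_{0},r}=C_{x_{1},\rho }$, contradicting the assumption that the two circles are distinct. Therefore $C_{x_{0},r}$ is the unique fixed circle of $T$. I expect the only subtle point to be the careful handling of the degenerate case $d(u,v)=0$: the Rhoades-type condition $(C3)^{\ast \ast }$ is stated for pairs with $u\neq v$ in its original formulation, so I would note that if $d(u,v)=0$ then already $u=v$ and there is nothing to prove, whereas if $d(u,v)>0$ the maximum reduces to $d(u,v)$ and the contradiction above applies. This case distinction, rather than any computation, is the main thing to get right; everything else mirrors the proof of Theorem \ref{thm2} verbatim.
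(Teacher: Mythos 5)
Your proof is correct and follows essentially the same route as the paper: assume a second fixed circle, take $u\in C_{x_{0},r}$ and $v$ on the other circle with $v\notin C_{x_{0},r}$, observe that since $Tu=u$ and $Tv=v$ the maximum in $(C3)^{\ast\ast}$ collapses to $d(u,v)$, and derive the contradiction $d(u,v)<d(u,v)$. Your explicit evaluation of the five terms and your remark on the degenerate case $u=v$ are just slightly more careful write-ups of the paper's own argument.
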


\begin{proof}
Suppose that there exist two fixed circles $C_{x_{0},r}$ and $C_{x_{1},\rho
} $ of the self-mapping $T$, that is, $T$ satisfy the conditions $(C1)^{\ast
\ast }$ and $(C2)^{\ast \ast }$ for each circles $C_{x_{0},r}$ and $%
C_{x_{1},\rho }$. Let $u\in C_{x_{0},r}$, $v\in C_{x_{1},\rho }$ and $u\neq
v $ be arbitrary points. We show that $d(u,v)=0$ and hence $u=v$. Using the
condition $(C3)^{\ast \ast }$ we have%
\begin{eqnarray*}
d(u,v) &=&d(Tu,Tv)<\max \{d(u,v),d(u,Tu),d(v,Tv),d(u,Tv),d(v,Tu)\} \\
&=&d(u,v)\text{,}
\end{eqnarray*}%
which is a contradiction. Consequently, it should be $u=v$ for all $u\in
C_{x_{0},r}$, $v\in C_{x_{1},\rho }$ and so $C_{x_{0},r}$ is the unique
fixed circle of $T$.
\end{proof}

Notice that the uniqueness of the fixed circle in Theorem \ref{thm1} and
Theorem \ref{thm5} can be also obtained using the contraction condition $%
(C3)^{\ast \ast }$. Similarly, the uniqueness of the fixed circle in Theorem %
\ref{thm7} can be also obtained using the contraction condition $(C3)$. More
generally it is possible to use appropriate contractive conditions for the
uniqueness of the fixed-circle theorems obtained in Section \ref{sec:1}.

\end{document}